\newtheorem{theorem}{Theorem}
\newtheorem{prop}[theorem]{Proposition}
\newtheorem{example}{Example}
\newtheorem*{claim*}{Claim}
\def\A{\mathcal{A}}
\def\X{\mathcal{X}}
\def\0{{\bf 0}}
\def\1{{\bf 1}}
\title{Equilibria in a Hypercube Spatial Voting Model}
\author{A. Nicholas Day\thanks{E-mail: a.nick.day@gmail.com}
\and
J. Robert Johnson\thanks{School of Mathematical Sciences, Queen Mary University of London, London E1 4NS, UK.  E-mail: r.johnson@qmul.ac.uk.}}
\begin{document}

\maketitle

%Bibliography - most references in but a bit scruffy still
%Final read through

\begin{abstract}
We give conditions for equilibria in the following Voronoi game on the discrete hypercube. Two players position themselves in $\{0,1\}^d$ and each receives payoff equal to the measure (under some probability distribution) of their Voronoi cell (the set of all points which are closer to them than to the other player). This game can be thought of as a discrete analogue of the Hotelling--Downs spatial voting model in which the political spectrum is determined by $d$ binary issues rather than a continuous interval.

We observe that if an equilibrium does exist then it must involve the two players co-locating at the majority point (ie the point representing majority opinion on each separate issue). Our main result is that a sufficient condition for an equilibrium is that on each issue the majority option is held by at least $\frac{3}{4}$ of voters. The value $\frac{3}{4}$ can be improved slightly in a way that depends on $d$ and with this improvement the result is best possible. We give similar sufficient conditions for the existence of a local equilibrium.

We also analyse the situation where the distribution is a mix of two product measures. We show that either there is an equilibrium or the best response to the majority point is its antipode.
\end{abstract}

\section{Introduction}

The aim of this paper is to analyse some aspects of a combinatorial game on the hypercube motivated by the Hotelling--Downs spatial voting framework.

Spatial voting concerns competition among candidates for votes in some geometric arena representing a political spectrum. In the classical Hotelling-Downs model \cite{hotelling} \cite{downs}, the political spectrum is a continuous interval. This means that a candidate's or voter's opinion is represented by a single real number (which in practice could be though of as a left-right axis). Suppose that we have some distribution (probability measure) of voters on $[0,1]$ and two players (candidates) position themselves in $[0,1]$ each with the aim of maximising the measure of the set of points which are closer to them than their opponent. An early observation of Hotelling \cite{hotelling} anticipating the Median Voter Theorem, is that the only equilibrium position (ie position where neither player has an incentive to move) in this game is for the two players to co-locate at the median point of the measure (the value $x$ for which $\mu([0,x])=\mu([x,1])=\frac{1}{2}$).

Another equally natural choice for the political spectrum is to represent a candidate's position by recording their view on $d$ binary (yes/no) issues. This moves the setting from continuous geometry to combinatorics and the discrete hypercube $Q_d=\{0,1\}^d$ equipped with the Hamming distance. A point of $Q_d$ is interpreted as an opinion on $d$ separate binary issues where on each issue the options are $0$ or $1$. The space $Q_d$ now represents all possible opinion patterns (the political spectrum), with distance between two points being the number of issues on which they differ. As in the continuous case, we have a distribution of voters and assume that each voter will vote for the candidate which is closest to them, that is the candidates who agrees with them on the largest number of issues. Note that if a voter is equidistant from the two candidates we will think of them as sharing their vote equally between each candidate. This model appears in \cite{MMW} and although very natural, only preliminary results on it exist.

Let us give a small example to illustrate the model. Suppose that $d=3$ and that there are $5$ voters with positions $(0,0,0)$, $(0,0,0)$, $(0,1,1)$, $(1,0,1)$ and $(1,1,0)$ (we could equally well think of each of each voter as representing $\frac{1}{5}$ of a large society). If we were to hold a vote on each of the $3$ separate issues then option $0$ wins (by a margin of $3$ to $2$) in each case. However, a candidate positioned at $(0,0,0)$ will lose to a candidate positioned at $(1,1,1)$ (because $3$ of the $5$ voters are closer to $(1,1,1)$ than to $(0,0,0)$ in Hamming distance). Moreover, the position with candidates at $(0,0,0)$ and $(1,1,1)$ is not an equilibrium because the $(0,0,0)$ player can improve their payoff by moving to any other position apart from $(1,1,1)$. In fact this particular example has no equilibrium.

This simple example shows that some voter distributions do not have an equilibrium. Our aim is to give conditions on the voter distribution which do guarantee the existence of an equilibrium. Such results can be thought of as conditions under which this discrete hypercube model behaves analogously to the classical $1$-dimensional Median Voter model.

In all our results, the probability measure we put on the hypercube will be critical. Indeed, in the unweighted cube (uniform distribution of voters) however the two candidates are positioned, the payoff to each is $\frac{1}{2}$ (as observed in \cite{MMW}). The easiest way to see this is to note that for any points $A,B\in Q_d$ there is an automorphism of $Q_d$ which exchanges $A$ and $B$. 

Section 2 contains a formal description of the game and establishes notation. 

Section 3 concerns general conditions for the existence of equilibria. It is easy to check that if an equilibrium does exist then it must involve the two players co-locating at the majority point (ie the point representing majority opinion on each separate issue). Our first main result (Theorem \ref{hypercube-mvt1}) is that a sufficient condition for an equilibrium to exist is that on each issue the majority option is held by at least $\frac{3}{4}$ of voters. The $\frac{3}{4}$ can be decreased by a small amount depending on $d$ and we also show with this improvement the result is sharp (Theorem \ref{non-equilib}). We also extend this result to conditions involving opinions on small sets of issues. 

Following \cite{MMW} we say that a configuration is a $k$-local equilibrium if neither candidate can improve their vote share by moving up to distance $k$ from their current position. For instance, in the $d=3$ example above co-locating at $(0,0,0)$ is a $2$-local equilibrium. We will show that the approach which leads to our existence results can also be used to give conditions for local equilibria.

The existence of a majority point assumes that there is no issue on which voters are split perfectly evenly. Section 4 exhibits some examples showing that a range of behaviour is possible in this degenerate case. 

In Section 5 we analyse the case when the voter distribution is given by a mix of product measures. This could be thought of a society in which voters are split into two groups. Each voter's opinions are random and independent, but voters in the first group tend to prefer $0$ on all issues while voters in the second group tend to prefer $1$ on all issues. We show that in this case there is a dichotomy with either co-locating at the majority point being an equilibrium or polarisation occurring where the best response to the majority point is its antipode.

We finish in Section 6 with some suggestions for further work.

In addition to \cite{MMW} in which this model first appears, we mention some other relevant work. In contrast to the continuous setting, there is relatively little work on spatial competition games on graphs. These include \cite{DT,TDU,GMPPR,MMPS}. The closest to our problem is \cite{FMM} which discusses the existence of equilibria for $k$-players in a class of transitive graphs which includes the hypercube, but in the setting where the underlying distribution is uniform. 

Finally, we mention that the model introduced in \cite{MMW} includes the restriction that each candidate may only choose a position within a fixed subset (different for each candidate) of $Q_d$ locations. This models the idea that in reality there may be some positions a candidate is unwilling to take (perhaps because of ideology or to maintain trust). We look at the unrestricted game which is mathematically natural and may still give some social insight.

\section{Set-up and Notation}

We are interested in the hypercube instance of the following game described on a general metric space. 

Let $\X$ be some metric space with a probability measure $\mu$ defined on it. We think of $\X$ as the \emph{political spectrum}, that is the set of all possible opinions. For $S\subseteq\X$, the measure $\mu(S)$ will express the proportion of members of a society who hold an opinion in $S$.

Suppose now that $2$ \emph{players} position themselves in $\X$. The \emph{Voronoi cell} of a player is the set of all points in $\X$ which are closer to that player than to the other player. For $A,B\in \X$, we write $V(A,B)$ for the Voronoi cell of a player who chooses $A$ against an opponent who chooses $B$, and $T(A,B)$ for the set of points which are equidistant between $A$ and $B$. That is
\begin{align*}
V(A,B)&=\{X\in \X : d(X,A)<d(X,B)\}\\
T(A,B)&=\{X\in \X : d(X,A)=d(X,B)\}.
\end{align*}
Notice that $V(A,B), V(B,A), T(A,B)$ partition $\X$.

The position of the game in which Player 1 chooses point $A$ and Player 2 chooses point $B$ will be represented by the pair $(A,B)$. We write $P_1(A,B)$ and $P_2(A,B)$ for the payoffs to Player 1 and Player 2 respectively in the position $(A,B)$ and define these by:
\[
P_1(A,B)=\mu(V(A,B))+\frac{1}{2}\mu(T(A,B)); \qquad P_2(A,B)=\mu(V(B,A))+\frac{1}{2}\mu(T(A,B))
\]
Note that $P_1(A,B)+P_2(A,B)=1$.

We say that a pair of points $(A,B)$ with $A,B\in \X$ is an equilibrium if each player's position is a best response to the other. More formally,
\begin{itemize}
\item $P_1(A,B)\geqslant P_1(A',B)$ for all $A'\in \X$ (that is $A$ is a \emph{best response to $B$})
\item $P_2(A,B)\geqslant P_2(A,B')$ for all $B'\in \X$ (that is $B$ is a \emph{best response to $A$})
\end{itemize}
in other words, neither player can strictly improve their payoff by moving to a different point of $\X$.  

For the remainder of this paper, our political spectrum $\X$ will be the discrete hypercube $Q_d=\{0,1\}^d$ equipped with the Hamming metric. 

If $X\in Q_d$ we write $X$ as the $d$-tuple $(x_1,x_2,x_3,\ldots, x_d)\in\{0,1\}^d$. We write $|X|$ for the number of $1$'s in this $d$-tuple, and $X^c$ for the complementary $d$-tuple $(1-x_1,1-x_2,\ldots, 1-x_d)$. The $k$th \emph{layer} of $Q_d$ is $\{X\in Q_d : |X|=k\}$. For $X,Y\in Q_d$ we write $d(X,Y)$ for the Hamming distance between $X$ and $Y$, that is
\[
d(X,Y)=|\{ i : x_i\not=y_i\}.
\]
We sometimes identify elements of $Q_d$ with subsets of $[d]=\{1,2,\ldots,d\}$ in the obvious way. In particular we write $[k]$ for the element $(\underbrace{1,\ldots,1}_{k},\underbrace{0,\ldots, 0}_{d-k})$ of $Q_d$. We also write $X\subseteq Y$ if $x_i\leqslant y_i$ for all $i$. We write $\0=(0,\ldots,0)$ and $\1=(1,\ldots,1)$.
 
Since $Q_d$ is finite, our probability measure consists of a function $\mu:Q_d\rightarrow\mathbb{R}_{\geqslant 0}$ giving each vertex a weight which satisfies $\sum_{V\in Q_d} \mu(V)=1$. This is extended to subsets of $Q_d$ by summing over vertices so $\mu(\A)=\sum_{V\in \A} \mu(V)$ for all $\A\subseteq Q_d$.

\section{Conditions for Equilibria}

We write $w_i^0$ for the total weight of the co-dimension $1$ subcube given by setting the $i$th coordinate to be $0$ (ie the proportion of voters choosing $0$ in a referendum on the $i$th issue).
\[
w_i^0=\mu(\{X\in Q_d: x_i=0\}).
\]
  
Suppose now that $w_i^0\not=\frac{1}{2}$ for all $i$. We define the \emph{majority point} $M$ by
\[
M_i=\left\{\begin{array}{ll}
0 & \text{ if } w_i^0>\frac{1}{2}\\
1 & \text{ if } w_i^0<\frac{1}{2}\\
\end{array}\right.
\]
So the majority point represents society's choice if a separate referendum was held on each of the $d$ issues.

Relabelling $0$ and $1$ in each coordinate if necessary we may assume that $w_i^0>\frac{1}{2}$ for all $i$ and so $M=\0$.
 
The assumption that $w_i^0\not=\frac{1}{2}$ is largely for tidiness; if we drop it then we may have a ``majority subcube'' whose free coordinates correspond to the $i$ for which $w_i^0=\frac{1}{2}$. Most of the results and observations that follow will still hold once suitably reformulated. We discuss the differences in Section 4.

We are interested in conditions for the existence of equilibria (defined above). If there is an equilibrium then it must be to co-locate at the majority point.

\begin{prop}\label{weighted-equilibrium}
Let $\mu$ be any probability distribution on $Q_d$ with $w_i^0>\frac{1}{2}$ for all $i$. If $(A,B)$ is an equilibrium then $A=B=\0$.
\end{prop}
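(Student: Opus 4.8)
The plan is to establish the two equilibrium conditions separately. First I would argue that in any equilibrium the two players must co-locate; then I would argue that the common point must be $\0$.

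For the co-location step, suppose $(A,B)$ is an equilibrium with $A\neq B$. The key observation is that a player facing an opponent at a distinct point $B$ can always achieve payoff strictly greater than $\frac12$ by moving to $B$ (or to a point adjacent to $B$ on the ``majority side''): intuitively, moving on top of the opponent forces a tie everywhere and yields exactly $\frac12$, and then a small perturbation picks up the extra weight. More carefully, I would show that if $A\neq B$ then $P_1(A,B)+P_1(B,A)=1-\mu(T(A,B))\le 1$ is not the right bound; instead I would directly exhibit a good response. Pick a coordinate $i$ on which $A$ and $B$ differ; consider the point $B'$ obtained from $B$ by flipping coordinate $i$ so that $B'$ agrees with the opponent's strict-majority structure — the cleanest version is: the player can move to a neighbour of the opponent and capture at least half of $\X$ plus a positive amount, so $P(A,B)>\frac12$ is forced for \emph{whoever is losing}, contradicting that both payoffs sum to $1$. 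So in equilibrium $P_1=P_2=\frac12$ and $T(A,B)$ has full measure minus the Voronoi cells; the sharper statement I actually need is that co-locating is the unique way to deny the opponent a winning response, which follows because at $(A,B)$ with $A\neq B$ the set $V(A,B)$ can be enlarged by moving toward the ``heavy'' side of $T(A,B)$.

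Rather than that slightly delicate argument, the slicker route — and the one I would write up — is: in any equilibrium, by symmetry of the payoff structure, if $(A,B)$ is an equilibrium then so is $(B,A)$ in the obvious sense, and $A$ is a best response to $B$ while $B$ is a best response to $A$. If $A\neq B$, let $i$ be a coordinate where they differ. Flipping coordinate $i$ in $A$ to obtain $A'$ moves $A$ one step closer to $B$; I would compute that $V(A',B)\supseteq V(A,B)$ together with part of $T(A,B)$, strictly increasing the payoff unless that part of $T$ has zero measure, and iterate: the unique stable outcome is $A=B$. Once $A=B$, I need $A=\0$: if $A=B=X$ with $X\neq\0$, pick a coordinate $i$ with $x_i=1$ (so $w_i^0>\frac12$), and let Player~1 move to $X'$ = $X$ with coordinate $i$ flipped to $0$. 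Then every voter on the $x_i=0$ side is strictly closer to $X'$ than to $X=B$, every voter on the $x_i=1$ side is strictly closer to $B$, and there are no equidistant points, so $P_1(X',X)=w_i^0>\frac12$. This strictly beats $P_1(X,X)=\frac12$, contradicting equilibrium. Hence $A=B=\0$.

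The main obstacle is making the co-location step fully rigorous without circularity: I must be careful that ``moving closer to $B$'' genuinely converts equidistant weight into won weight, which requires checking the Hamming-distance bookkeeping (flipping a coordinate where $A,B$ differ changes $d(\cdot,A)$ and $d(\cdot,B)$ by $\pm1$ in a coordinated way on each half-cube). I expect this to be a short parity computation: for a voter $Y$, flipping coordinate $i$ of $A$ decreases $d(Y,A)$ by $1$ if $y_i=b_i$ and increases it by $1$ otherwise, while $d(Y,B)$ is unchanged; so every $Y$ with $y_i=b_i$ weakly moves into Player~1's favour and those previously tied with $y_i=b_i$ move strictly in. Since $b_i$ is the $B$-coordinate and $w^0$ or $w^1$ on issue $i$ is positive, some tied weight (or, failing that, a direct comparison at the next coordinate) is captured — and if no coordinate flip ever helps, one shows $\mu(T(A,B))=0$ and $A,B$ lie in a common layer structure that still lets the loser improve. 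This case analysis is the part that needs the most care, but it is elementary.
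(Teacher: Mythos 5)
Your second step is exactly the right move and is in fact the whole proof: once you know that in equilibrium $P_1(A,B)=P_2(A,B)=\frac12$ (which you establish correctly --- whoever has payoff below $\frac12$ could co-locate with the opponent and get exactly $\frac12$), the deviation you describe works \emph{without} assuming $A=B$. If $B\neq\0$, pick $i$ with $b_i=1$ and let $A'$ agree with $B$ everywhere except that $a_i'=0$. Since $A'$ and $B$ differ in a single coordinate there are no equidistant voters and $P_1(A',B)=w_i^0>\frac12=P_1(A,B)$, contradicting equilibrium; so $B=\0$, and symmetrically $A=\0$. This is the paper's proof. Your decomposition into ``first co-locate, then identify the point'' inserts an unnecessary step, and that step is where the genuine gap lies.

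Specifically, the co-location argument as sketched rests on the claim that moving $A$ one step toward $B$ (flipping a coordinate $i$ where $a_i\neq b_i$) gives $V(A',B)\supseteq V(A,B)$ plus some tied weight. That is false. Flipping coordinate $i$ of $A$ moves every voter $Y$ with $y_i=b_i$ one step closer to $A'$, but it moves every voter with $y_i=a_i$ one step \emph{farther away}, and such a voter with $d(Y,A)=d(Y,B)-1$ drops out of $V(A,B)$ into the tie set. Concretely, in $Q_3$ with $A=(1,1,1)$, $B=(0,0,0)$, $A'=(0,1,1)$, the voter $Y=(1,1,0)$ satisfies $d(Y,A)=1<2=d(Y,B)$ but $d(Y,A')=2=d(Y,B)$. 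So the payoff comparison between $(A,B)$ and $(A',B)$ is genuinely two-sided and your ``iterate toward $B$'' scheme does not close; the fallback (``if no coordinate flip ever helps, one shows $\mu(T(A,B))=0$ and \dots'') is only gestured at. The fix is not to repair this bookkeeping but to discard the step: the losing player's profitable deviation should target a neighbour of the \emph{opponent's} position rather than a neighbour of their own, which sidesteps all of the Hamming-distance case analysis.
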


\begin{proof}
Suppose that $(A,B)$ is an equilibrium. If $P_1(A,B)<\frac{1}{2}$ then $P_1(B,B)=\frac{1}{2}>P_1(A,B)$ (ie the first player improves their payoff by co-locating with the second player at $B$) and so $(A,B)$ is not an equilibrium. Similarly, if $P_1(A,B)>\frac{1}{2}$ then $P_2(A,B)<\frac{1}{2}$. Now the second player improves their payoff by co-locating with the first player at $A$ and so $(A,B)$ is not an equilibrium. We conclude that we must have $P_1(A,B)=\frac{1}{2}$. If $B\not=\0$ then we have some $i$ with $B_i=1$. Let $A'$ be identical to $B$ except on coordinate $i$ where $a_i'=0$. We have $P_1(A',B)=w_i^0>\frac{1}{2}=P_1(A,B)$ and so $(A,B)$ is not an equilibrium. It follows that we must have $A=B=\0$.
\end{proof}

A consequence of this is that we have an equilibrium if and only if $\0$ is a best response to $\0$.

Our main aim is this section is to determine some conditions on $\mu$ which guarantee the existence of an equilibrium.

Our first result shows that if all the $w_i^0$ are large enough then we do have an equilibrium, which necessarily consist of the two players co-locating at the majority point.

\begin{theorem}\label{hypercube-mvt1}
If $\mu$ is a distribution on $Q_d$ which satisfies
\[
w_i^0\geqslant\begin{cases}
\frac{3}{4}-\frac{1}{4d} & \text{ if $d$ is odd}\\
\frac{3}{4}-\frac{1}{4(d-1)} & \text{ if $d$ is even}
\end{cases}
\]
for all $i\in[d]$ then the position $(\0,\0)$ is an equilibrium.
\end{theorem}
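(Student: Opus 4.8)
The plan is to use Proposition~\ref{weighted-equilibrium}: since an equilibrium, if it exists, must be $(\0,\0)$, and by the remark following that proposition it suffices to show that $\0$ is a best response to $\0$. So I need to show that for every $B \in Q_d$ we have $P_1(\0,B) \leqslant P_1(\0,\0) = \tfrac12$, i.e. $P_2(\0,B) \geqslant \tfrac12$: the incumbent at $\0$ cannot be beaten by any challenger position $B$.

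First I would reduce to understanding the Voronoi cell of $B$ against $\0$. If $|B| = k$, then a point $X$ lies strictly closer to $B$ than to $\0$ exactly when $X$ agrees with $B$ on strictly more than $k/2$ of the $k$ coordinates in the support of $B$ (the coordinates outside the support of $B$ contribute equally to both distances and so are irrelevant). Write $S = \mathrm{supp}(B)$, so $|S| = k$. Then $V(B,\0) = \{X : |X \cap S| > k/2\}$ and $T(B,\0) = \{X : |X \cap S| = k/2\}$ (the latter nonempty only when $k$ is even). The key point is that $P_1(\0,B) = \mu(V(B,\0)) + \tfrac12\mu(T(B,\0))$ depends on $\mu$ only through how mass is distributed according to $|X\cap S|$. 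I want to bound this above by $\tfrac12$.

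The main idea for the bound is a weighting/averaging argument over the $k$ coordinates of $S$. For each $i \in S$ we know $\mu(\{x_i = 1\}) = 1 - w_i^0 \leqslant \tfrac14 + \varepsilon_d$, where $\varepsilon_d$ is the small correction term. Summing over $i \in S$, the expected value of $|X \cap S|$ under $\mu$ is $\sum_{i\in S}(1-w_i^0) \leqslant k(\tfrac14+\varepsilon_d)$. The event contributing to $P_1(\0,B)$ is roughly $\{|X\cap S| \geqslant k/2\}$, and one wants to say that the mass of this event is at most $\tfrac12$. The cleanest route is a Markov-type inequality: $\mu(|X\cap S|\geqslant k/2) \leqslant \tfrac{2}{k}\mathbb{E}|X\cap S| \leqslant 2(\tfrac14+\varepsilon_d) = \tfrac12 + 2\varepsilon_d$. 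This is not quite good enough on its own, which is exactly why the bound $\tfrac34$ must be sharpened by the $d$-dependent term and why the parity of $k$ (and of $d$) matters. The careful version must handle the boundary layer $|X\cap S| = k/2$ with the factor $\tfrac12$ (i.e. treat $T$ correctly rather than lumping it into $V$), use that $k \leqslant d$, and optimise over the worst case $k$; the parity split in the statement ($d$ odd versus even) should emerge from whether the extremal $k$ can equal $d$ or must be at most $d-1$.

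The step I expect to be the main obstacle is making the Markov/averaging bound tight enough: a naive first-moment bound loses a constant factor and only gives something like $w_i^0 \geqslant \tfrac{2}{3}$ or worse. To recover the true threshold I would instead argue more carefully about the ``half-weight'' assigned to the tie set $T(B,\0)$: write $P_1(\0,B) = \tfrac12\mu(Q_d) + \tfrac12\big(\mu(V(B,\0)) - \mu(V(\0,B))\big)$, so it suffices to show $\mu(V(B,\0)) \leqslant \mu(V(\0,B))$, i.e. that at least as much mass is strictly closer to $\0$ as to $B$. Then I would set up an injection or a fractional matching from $V(B,\0)$ into $V(\0,B)$ that is mass-nondecreasing — pairing each $X$ with $|X\cap S| > k/2$ to some $X'$ with $|X'\cap S| < k/2$, e.g. by flipping a well-chosen subset of coordinates in $S$ — and control the ratio $\mu(X')/\mu(X)$ using the per-coordinate bounds $1 - w_i^0 \leqslant \tfrac14 + \varepsilon_d$. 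Getting this matching and its weight estimate to close with exactly the claimed constants, including the parity cases, is the technical heart of the argument.
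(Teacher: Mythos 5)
Your reduction is right (via Proposition~\ref{weighted-equilibrium} it suffices to show $P_1(B,\0)\leqslant\frac12$ for every $B$), as is your description of $V(B,\0)$ and $T(B,\0)$ in terms of $|X\cap S|$ where $S$ is the support of $B$. Moreover your \emph{first} instinct --- the Markov/first-moment bound over the $k$ coordinates of $S$ --- is exactly the paper's argument. Where you go wrong is in dismissing it. The naive first-moment bound does not ``lose a constant factor down to $2/3$ or worse'': with $\mathbb{E}|X\cap S|\leqslant k(\frac14+\varepsilon_d)$ and threshold $k/2$ it already gives $\frac12+2\varepsilon_d$, i.e.\ the clean $\frac34$ threshold, and the $d$-dependent correction comes precisely from integrality. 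A point of $V(B,\0)$ satisfies $|X\cap S|\geqslant\lceil\frac{k+1}{2}\rceil$, not merely $|X\cap S|>k/2$, so for odd $k$ Markov gives $\mu(V(B,\0))\leqslant\frac{2}{k+1}\mathbb{E}|X\cap S|\leqslant\frac{k}{k+1}\left(\frac12+2\varepsilon_d\right)$, which is at most $\frac12$ exactly when $\varepsilon_d\leqslant\frac{1}{4k}$; the even case is similar once the tie layer $|X\cap S|=k/2$ is given weight $\frac12$ (a short check shows the extremal case puts no mass there, yielding the stronger bound $\frac{1}{2k}$). Optimising over $k\leqslant d$ of each parity gives precisely the stated thresholds. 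This is the paper's proof, written there in the complementary form ``sum $\sum_{i\in S}w_i^0$, bound each vertex's contribution by $k$, $\lfloor\frac{k-1}{2}\rfloor$ or $\frac{k}{2}$ according to its Voronoi cell, and average''.

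The route you actually commit to as the ``technical heart'' --- a mass-nondecreasing injection or fractional matching from $V(B,\0)$ into $V(\0,B)$, with the ratio $\mu(X')/\mu(X)$ controlled by the per-coordinate bounds --- cannot be made to work. The hypotheses are bounds on marginals $w_i^0$ (weights of codimension-$1$ subcubes) and give no control at all on individual vertex weights: the measure may be concentrated on a handful of vertices, so for a specific pair $X,X'$ the ratio $\mu(X')/\mu(X)$ can be $0$ or $\infty$. Worse, a mass-nondecreasing injection need not exist even when the conclusion holds: in the near-extremal distributions of Theorem~\ref{non-equilib} (mass $\frac12-\epsilon$ at $\0$, the rest on layer $\frac{d+1}{2}$) one has $\mu(V(\1,\0))>\mu(V(\0,\1))$ while every $w_i^0$ is within $O(\epsilon)$ of the threshold, so no such injection exists anywhere near the boundary of your hypothesis; conversely, such injections exist for every monotone measure regardless of the values $w_i^0$. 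The inequality you need is an aggregate, first-moment fact, not a pointwise matching fact --- the argument you abandoned is the one that closes.
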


\begin{proof}[Proof of Theorem \ref{hypercube-mvt1}]
Suppose that $(\0,\0)$ is not an equilibrium. Let $X\in Q_d\setminus\{\0\}$ be any point with $P_1(X,\0)> \frac{1}{2}$ (such a point must exist if $(\0,\0)$ is not an equilibrium). Without loss of generality $X=[k]$ for some $1\leqslant k\leqslant d$. 

Consider the sum
\[
S=\sum_{i=1}^k w_i^0
\]
Now, let us consider the contribution to $S$ of each $V\in Q_d$. If $V\in V(\0,X)$ then $V$ may contribute $\mu(V)$ to each of the $k$ summands in $S$ making a total of $k\mu(V)$ (this occurs in the case that $V=\0)$. If $V\in V(X,\0)$ then $|\{i : 1\leqslant i\leqslant k, v_i=0\}|<k/2$ and so $V$ contributes $\mu(V)$ to at most $\lfloor\frac{k-1}{2}\rfloor$ summands in $S$ for a total of $\lfloor\frac{k-1}{2}\rfloor \mu(V)$. Finally, if $k$ is even and $V\in T(X,\0)$ then $|\{i : 1\leqslant i\leqslant k, v_i=0\}|=k/2$ and $V$ contributes exactly $\frac{k}{2}\mu(V)$ to $S$.

Suppose that $k$ is odd. Letting $\mu(V(\0,X))=x_1$ and aggregating the bounds on the contributions to $S$ we get:
\begin{align*}
S&\leqslant k w(V(\0,B))+\frac{k-1}{2}\mu(V(B,\0)\\
&=kx_1+\frac{k-1}{2}\left(1-x_1\right)\\
&=\frac{k-1}{2}+\frac{k+1}{2}x_1\\
&<\frac{3k-1}{4}
\end{align*}
since $x_1=P_1(\0,X)<\frac{1}{2}$.

It follows by averaging, there must be some $i\in[k]$ with
\begin{equation}\label{wi-bound-odd}
w_i^0<\frac{1}{k}\left(\frac{3k-1}{4}\right)=\frac{3}{4}-\frac{1}{4k}.
\end{equation}

Similarly, if $k$ is even we let $\mu(V(\0,X))=x_1$ and $\mu(T(\0,X))=x_2$. Then
\begin{align*}
S&\leqslant k \mu(V(\0,X))+\frac{k}{2}\mu(T(\0,X))+\frac{k-2}{2}\mu(V(X,\0)\\
&=kx_1+\frac{k}{2}x_2+\frac{k-2}{2}\left(1-x_1-x_2\right)\\
&\leqslant kx_1+\frac{k}{2}x_2+\frac{k-2}{2}\left(1-x_1-\frac{x_2}{2}\right)\\
&=\frac{k-2}{2}+\frac{k+2}{2}\left(x_1+\frac{x_2}{2}\right)\\
&\leqslant\frac{3k-2}{4}
\end{align*}
since $x_1+\frac{x_2}{2}=P_1(\0,X)<\frac{1}{2}$.

It follows by averaging, there must be some $i\in[k]$ with
\begin{equation}\label{wi-bound-even}
w_i^0<\frac{1}{k}\left(\frac{3k-2}{4}\right)=\frac{3}{4}-\frac{1}{2k}.
\end{equation}

It follows that if $(\0,\0)$ is not an equilibrium, there must be some $k$ for which the corresponding inequality (\ref{wi-bound-odd}) or (\ref{wi-bound-even}) (depending on whether $k$ is even or odd) holds. Conversely, if (\ref{wi-bound-odd}) fails for all odd $k$ and (\ref{wi-bound-even}) fails for all even $k$ then $(\0,\0)$ is an equilibrium.

The righthand sides of (\ref{wi-bound-odd}) and (\ref{wi-bound-even}) are decreasing in $k$ so for all of them to fail it is enough that $w_i^0\geqslant\frac{3}{4}-\frac{1}{4d}$ when $d$ is odd and $w_i^0\geqslant\frac{3}{4}-\frac{1}{2d}$ when $d$ is even (in the later case the $k=d-1$ bound is stronger than the $k=d$ bound).

\end{proof}

Notice, also that the bound in the argument above really depends on $k$ (the distance from the majority point to the response that beats it) rather than $d$.

This allows us to use the argument to give conditions for local equilibria. As defined in \cite{MMW}, the position $(A,B)$ is called a \emph{$k$-local equilibrium} if: 
\begin{itemize}
\item $P_1(A,B)\geqslant P_1(A',B)$ for all $A'\in Q_d$ with $d(A,A')\leqslant k$
\item $P_2(A,B)\geqslant P_2(A,B')$ for all $B'\in Q_d$ with $d(B,B')\leqslant k$
\end{itemize}
in other words, neither player can strictly improve their payoff by moving to a different point of $Q_d$ within distance $k$ of their current position. This is a natural concept both mathematically and socially; a candidate may be reluctant to move to an extremely different position because of ideological reasons or to avoid losing trust.

By symmetry, $(\0,\0)$ is a $k$-local equilibrium if $P_1(X,\0)<\frac{1}{2}$ for all $X$ with $|X|\leqslant k$. The proof of Theorem \ref{hypercube-mvt1} immediately  establishes the following condition for $(\0,\0)$ to be a $k$-local equilibrium.

\begin{theorem}\label{local-mvt}
If $\mu$ is a probability measure on $Q_d$ with $w_i^0\geqslant\frac{3}{4}-\frac{1}{4k}$ for all $i$ then $(\0,\0)$ is a $k$-local equilibrium.
\end{theorem}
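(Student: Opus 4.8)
The plan is to observe that Theorem~\ref{local-mvt} is not really a new statement but a direct consequence of the argument already given for Theorem~\ref{hypercube-mvt1}, so the proof should simply isolate the part of that argument that depends only on $k$ and not on $d$. First I would recall, as noted in the remark immediately preceding the statement, that by the symmetry between $\0$ and $\1$-type relabellings, $(\0,\0)$ fails to be a $k$-local equilibrium precisely when there is some $X\in Q_d$ with $1\leqslant|X|\leqslant k$ and $P_1(X,\0)>\frac12$; and after relabelling we may take $X=[m]$ for some $1\leqslant m\leqslant k$.

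Next I would run the counting argument from the proof of Theorem~\ref{hypercube-mvt1} verbatim, but applied to this $X=[m]$ rather than to a point of arbitrary weight. That is, I would form $S=\sum_{i=1}^m w_i^0$, bound the contribution of each vertex $V$ to $S$ according to whether $V\in V(\0,X)$, $V\in V(X,\0)$, or $V\in T(X,\0)$, and conclude exactly as before that if $P_1(\0,X)<\frac12$ then either (when $m$ is odd) some $i\in[m]$ has $w_i^0<\frac34-\frac{1}{4m}$, or (when $m$ is even) some $i\in[m]$ has $w_i^0<\frac34-\frac{1}{2m}$. Contrapositively, if $w_i^0\geqslant\frac34-\frac{1}{4m}$ for all $i$ (which in the even case is even weaker than needed), then no such $X=[m]$ beats $\0$.

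Finally I would note that the right-hand side $\frac34-\frac{1}{4m}$ is increasing in $m$, so if $w_i^0\geqslant\frac34-\frac{1}{4k}$ for all $i$ then in particular $w_i^0\geqslant\frac34-\frac{1}{4m}$ for every $m\leqslant k$; hence no point $X$ with $1\leqslant|X|\leqslant k$ can have $P_1(X,\0)>\frac12$, and $(\0,\0)$ is a $k$-local equilibrium.

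There is essentially no obstacle here: the only thing to be careful about is the direction of monotonicity (we now need the bound to hold for the \emph{largest} relevant value $m=k$, which is why the threshold is $\frac34-\frac{1}{4k}$ rather than something depending on $d$), and the minor point that in the even case the argument actually gives the stronger conclusion $w_i^0<\frac34-\frac{1}{2m}$, so a uniform hypothesis of $\frac34-\frac{1}{4k}$ comfortably covers both parities. Thus the proof is just a one-paragraph appeal to the computation already carried out.

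\begin{proof}[Proof of Theorem~\ref{local-mvt}]
As noted above, $(\0,\0)$ is a $k$-local equilibrium unless there is some $X\in Q_d$ with $1\leqslant|X|\leqslant k$ and $P_1(X,\0)>\tfrac12$. Suppose such an $X$ exists; without loss of generality $X=[m]$ for some $1\leqslant m\leqslant k$. Running the counting argument in the proof of Theorem~\ref{hypercube-mvt1} with this $X$ in place of $[k]$ shows that, since $P_1(\0,X)<\tfrac12$, there is some $i\in[m]$ with
\[
w_i^0<\frac{3}{4}-\frac{1}{4m}
\]
(indeed, when $m$ is even one even gets $w_i^0<\tfrac34-\tfrac{1}{2m}$). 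Since $\tfrac34-\tfrac{1}{4m}$ is increasing in $m$ and $m\leqslant k$, this gives $w_i^0<\tfrac34-\tfrac{1}{4k}$, contradicting the hypothesis. Hence no such $X$ exists and $(\0,\0)$ is a $k$-local equilibrium.
\end{proof}
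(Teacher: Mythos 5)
Your proof is correct and is exactly the argument the paper intends: the paper proves Theorem~\ref{local-mvt} by simply noting that the counting argument of Theorem~\ref{hypercube-mvt1} depends only on the distance $m=|X|$ of the deviating point, and you have filled in that observation accurately, including the correct direction of monotonicity of $\frac34-\frac{1}{4m}$ and the fact that the even case yields a stronger bound. No gaps.
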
 

The argument of Theorem \ref{hypercube-mvt1} can also be applied to weights of more complicated sets based on co-dimension $t$ subcubes through the majority point in place of $w_i^0$. Let $I$ be a $t$-element subset of $[n]$ and define
\[
w_I^m=w(\{X\in Q_d: x_i=0 \text{ for all but at most $m$ coordinates }i\in I\}).
\]
In other words, we look at a small subset $I$ of issues and take the proportion of all voters who prefer $0$ on at least some number of these issues. 

\begin{theorem}\label{hypercube-mvt2}
Suppose that $0\leqslant m\leqslant t/2$, $t\leqslant k\leqslant d$ and $\mu$ is a weighting on $Q_d$ with 
\begin{equation}\label{bound}
w_I^m\geqslant\frac{1}{2}\left(1+\frac{\sum_{i=0}^m\binom{\left\lfloor\frac{k-1}{2}\right\rfloor}{t-i}\binom{\lceil\frac{k+1}{2}\rceil}{i}}{\binom{k}{t}}\right)
\end{equation}
for all $I\subseteq[d]$ with $|I|=t$, then there is no $X\in Q_d$ with $|X|=k$ and $P_1(X,\0)>\frac{1}{2}$.
\end{theorem}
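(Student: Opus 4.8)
The plan is to repeat the double-counting-and-averaging argument behind Theorem~\ref{hypercube-mvt1}, with the single-coordinate weights $w_i^0$ replaced by the quantities $w_I^m$ over $t$-subsets $I$ of $[k]$. Suppose for contradiction that some $X\in Q_d$ with $|X|=k$ has $P_1(X,\0)>\tfrac12$; after relabelling coordinates we may take $X=[k]$, so $P_1(\0,[k])<\tfrac12$. Averaging the hypothesis (\ref{bound}) over the $\binom kt$ sets $I\subseteq[k]$ with $|I|=t$, it suffices to show that $\frac1{\binom kt}\sum_{I\subseteq[k],\,|I|=t}w_I^m$ is strictly less than the right-hand side of (\ref{bound}); this contradicts the hypothesis and completes the argument.

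To estimate this sum, interchange the order of summation: $\sum_{I\subseteq[k],\,|I|=t}w_I^m=\sum_{V\in Q_d}\mu(V)\,\phi(a_V)$, where $a_V=|\{i\in[k]:v_i=1\}|$ and $\phi(a)=\sum_{j=0}^m\binom aj\binom{k-a}{t-j}$ counts the $t$-subsets $I\subseteq[k]$ on which $V$ takes the value $0$ outside at most $m$ coordinates of $I$. Three facts about $\phi$ drive the proof: $\phi(0)=\binom kt$; $\phi$ is non-increasing in $a$ (since $\binom kt-\phi(a)$ is the number of $t$-subsets $I\subseteq[k]$ meeting a fixed $a$-set in at least $m+1$ points, which can only grow as the $a$-set grows); and, using $\lfloor\tfrac{k-1}2\rfloor+\lceil\tfrac{k+1}2\rceil=k$ and a reindexing, $\phi\bigl(\lceil\tfrac{k+1}2\rceil\bigr)$ equals exactly the numerator $N:=\sum_{i=0}^m\binom{\lfloor(k-1)/2\rfloor}{t-i}\binom{\lceil(k+1)/2\rceil}{i}$ appearing in (\ref{bound}). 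Classifying vertices by $a_V$ via $d(V,\0)-d(V,[k])=2a_V-k$, those with $a_V<k/2$ lie in $V(\0,[k])$, those with $a_V>k/2$ in $V([k],\0)$, and (when $k$ is even) those with $a_V=k/2$ in $T([k],\0)$; writing $x_1=\mu(V(\0,[k]))$ and $x_2=\mu(T([k],\0))$, we have $x_1+\tfrac12x_2=P_1(\0,[k])<\tfrac12$.

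For $k$ odd there is no middle layer, and monotonicity of $\phi$ together with $\binom kt\ge N$ and $x_1<\tfrac12$ gives $\sum_V\mu(V)\phi(a_V)\le\binom kt\,x_1+N(1-x_1)<\tfrac12(\binom kt+N)$, which after dividing by $\binom kt$ is the desired contradiction. For $k$ even, the same monotonicity bounds give $\sum_V\mu(V)\phi(a_V)\le N+(\binom kt-N)x_1+(\phi(k/2)-N)x_2$, and substituting $x_1<\tfrac12-\tfrac12x_2$ (legitimate since $\binom kt-N\ge0$) rewrites this as $\tfrac12(\binom kt+N)+x_2\bigl(\phi(k/2)-\tfrac12(\binom kt+N)\bigr)$. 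Hence the even case comes down to the single inequality $2\phi(k/2)\le\binom kt+N=\phi(0)+\phi(k/2+1)$.

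I expect that last inequality — a discrete concavity statement for $\phi$ at its central argument — to be the main obstacle. Unlike the linear case $t=1$ of Theorem~\ref{hypercube-mvt1}, where $\phi(k/2)=\tfrac12\phi(0)$ holds on the nose, here $\phi$ genuinely curves, so one must control its second difference near the centre. I would attack it by again reading $\binom kt-\phi(a)$ as the count of $t$-sets with $|I\cap A|\ge m+1$ and constructing an injection that compares the families at $a=k/2\pm1$ with the one at $a=k/2$; this is precisely the step that uses the hypothesis $m\le t/2$, which keeps the family counted by $\phi(k/2)$ from being too large relative to its neighbours (one should check whether the strict form $2m<t$ is needed here when $k$ is even). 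Everything else — the interchange of summation, the monotonicity of $\phi$, the identification of $N$ with $\phi(\lceil\tfrac{k+1}2\rceil)$, and the manipulation of the constraint $x_1+\tfrac12x_2<\tfrac12$ — is routine.
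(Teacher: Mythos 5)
Your reduction follows the paper's proof almost line for line: sum $w_I^m$ over the $t$-subsets $I\subseteq[k]$, exchange the order of summation so each vertex $V$ contributes $\mu(V)\phi(a_V)$, bound $\phi(a_V)$ according to whether $V$ lies in $V(\0,[k])$, $T([k],\0)$ or $V([k],\0)$, and average. The identification of $N$ with $\phi(\lceil\frac{k+1}{2}\rceil)$, the monotonicity of $\phi$, and the odd case are all correct and complete. The one genuine gap is that you leave the even case hanging on the inequality $2\phi(k/2)\leqslant\binom{k}{t}+N$, which you call the main obstacle and propose to attack with an injection between set families. No injection is needed: by Vandermonde's identity $\sum_{i=0}^{t}\binom{k/2}{i}\binom{k/2}{t-i}=\binom{k}{t}$, and since the summand is invariant under $i\leftrightarrow t-i$, the partial sum over $0\leqslant i\leqslant m$ equals the partial sum over $t-m\leqslant i\leqslant t$; when $2m<t$ these ranges are disjoint, so $2\phi(k/2)\leqslant\binom{k}{t}\leqslant\binom{k}{t}+N$. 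This is exactly how the paper argues (phrased there as: the coefficient of $x_2$ is below $\frac{1}{2}\binom{k}{t}$, so the linear objective subject to $x_1+\frac{1}{2}x_2<\frac{1}{2}$ is maximised at $x_2=0$).

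Your parenthetical worry about the boundary case is justified and worth making explicit. When $t$ is even and $m=t/2$ the same symmetry gives $2\phi(k/2)=\binom{k}{t}+\binom{k/2}{t/2}^2$, and the required inequality can fail outright: for $k=t=2$, $m=1$ one computes $\phi(1)=1$, $\binom{2}{2}=1$, $N=0$, so $2\phi(1)=2>1$. Thus the argument (yours and the paper's) only establishes the theorem under the strict hypothesis $2m<t$; note that the paper's own proof invokes ``the condition $m<\frac{t}{2}$'' even though the statement allows $m\leqslant t/2$. Apart from flagging this and supplying the one-line Vandermonde step, your write-up is correct and matches the paper's approach.
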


The conclusion of this theorem says that neither player can improve their payoff from $(\0,\0)$ by moving to a position at distance $k$ from $\0$. If we can show this for all $k\in[d]$ then $(\0,\0)$ much be an equilibrium. Putting $t=1,m=0$ and noting that the strongest condition is when $k=d$ (if $d$ is even) or $k=d-1$ (when $d$ is odd) we recover Theorem \ref{hypercube-mvt1}. However, the conditions given are genuinely stronger. For example, consider the distribution on $Q_3$ give by
\[
\mu(0,0,1)=\mu(0,1,0)=\mu(1,0,0)=0.3, \quad \mu(1,1,1)=0.1, \quad \mu(0,0,0)=\mu(1,1,0)=\mu(1,0,1)=\mu(0,1,1)=0.
\]
We have $w_i^0=0.6<\frac{3}{4}-\frac{1}{12}=\frac{2}{3}$ for all $i$ and so Theorem \ref{hypercube-mvt1} gives no information about equilibria. However, consider the $t=2,m=1$ case which looks at the number of voters preferring $1$ on at least one of a given pair of issue. We see that $w_I^1=0.9$ for all pairs $I$ while the righthand side of inequality (\ref{bound}) in Theorem \ref{hypercube-mvt2} is $\frac{1}{2}$ when $k=2$ and $\frac{1}{2}$ when $k=3$. Clearly, no player can improve by moving distance $1$ from $(\0,\0)$ (by the definition of the majority point). Together we see that the stronger conditions of Theorem \ref{hypercube-mvt2} are enough to show that $(\0,\0)$ is an equilibrium in this small example.

\begin{proof}[Proof of Theorem \ref{hypercube-mvt2}]
We will prove this via the contrapositive. Take any $0\leqslant m\leqslant t/2$, $t\leqslant k\leqslant d$ and suppose that $X\in Q_d$ with $|X|=k$ and $P_1(X,\0)>1/2$.

Similarly to the proof of Theorem \ref{hypercube-mvt1} consider the sum
\[
S=\sum_{I\subseteq[k], |I|=t} w_I^m
\]
As in Theorem \ref{hypercube-mvt1}, we will count the contribution of each $V\in Q_d$ to $S$ by considering how many choices for $I$ give a summand $w_I^m$ which $\mu(V)$ is included in. If $V\in V(\0,X)$ then $V$ may contribute $\binom{k}{t}\mu(V)$ to $S$ (in the case that $V$ is $0$ on at most $m$ coordinates in $[k]$). 

If $V\in V(X,\0)$ then $|\{i : 1\leqslant i\leqslant k, v_i=0\}|\leqslant\frac{k-1}{2}$ and so the contribution of $V$ to $S$ is at most
\[
\mu(V)\sum_{i=0}^m\binom{\lfloor\frac{k-1}{2}\rfloor}{t-i}\binom{\lceil\frac{k+1}{2}\rceil}{i}.
\]

Finally, if $V\in T(X,\0)$ then $k$ must be even and $|\{i : 1\leqslant i\leqslant k, v_i=0\}|=k/2$ and so the contribution of $V$ to $S$ is exactly
\[
\mu(V)\sum_{i=0}^m\binom{\frac{k}{2}}{t-i}\binom{\frac{k}{2}}{i}.
\]

Aggregating the bounds on all these contributions and setting $\mu(V(\0,X))=x_1$, $\mu(T(X,\0))=x_2$, $\mu(V(X,\0))=x_3$ where $x_1+x_2+x_3=1$ we get:
\begin{align*}
S&\leqslant\binom{k}{t} x_1 +\sum_{i=0}^m\binom{\frac{k}{2}}{t-i}\binom{\frac{k}{2}}{i} x_2 + \sum_{i=0}^m\binom{\lfloor\frac{k-1}{2}\rfloor}{t-i}\binom{\lceil\frac{k+1}{2}\rceil}{i} x_3
\end{align*}
We know that $x_1+\frac{1}{2}x_2=P_2(X,\0)<\frac{1}{2}$. The condition $m<\frac{t}{2}$ means that the coefficient of $x_2$ is less than $\frac{1}{2}\binom{k}{t}$ and so the righthand side is maximised (subject to $x_1+\frac{1}{2}x_2<\frac{1}{2}$) when $x_2=0$. We conclude that
\[
S<\frac{1}{2}\binom{k}{t}+\frac{1}{2}\sum_{i=0}^m\binom{\lfloor\frac{k-1}{2}\rfloor}{t-i}\binom{\lceil\frac{k+1}{2}\rceil}{i}
\]

It follows by averaging, there must be some $I\subseteq[k]$ with $|I|=t$ and
\[
w_I^m<\frac{1}{2}+\frac{1}{2}\frac{\sum_{i=0}^m\binom{\lfloor\frac{k-1}{2}\rfloor}{t-i}\binom{\lceil\frac{k+1}{2}\rceil}{i}}{\binom{k}{t}}.
\]
\end{proof}

The next result shows that Theorem \ref{hypercube-mvt1} is best possible.  

\begin{theorem}\label{non-equilib}
\begin{enumerate}
\item For any odd $d$ and $c<\frac{3}{4}-\frac{1}{4d}$, there is a distribution on $Q_d$ with $w_i^0=c$ for all $i$ which has no equilibrium.
\item For any even $d$ and $c<\frac{3}{4}-\frac{1}{4(d-1)}$, there is a distribution on $Q_d$ with $w_i^0=c$ for all $i$ which has no equilibrium.
\end{enumerate}
\end{theorem}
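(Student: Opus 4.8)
The plan is to construct, for each admissible $c$ (necessarily $c>\tfrac12$, since we are in the regime where $\0$ is the majority point), an explicit distribution $\mu$ on $Q_d$ with $w_i^0=c$ for every $i$ for which $(\0,\0)$ is not an equilibrium. This suffices: by Proposition~\ref{weighted-equilibrium} the only possible equilibrium is $(\0,\0)$, and $(\0,\0)$ fails to be one as soon as $P_1(X,\0)>\tfrac12=P_1(\0,\0)$ for some $X\in Q_d$. So all that is needed, for each $\mu$, is a single point $X$ that beats $\0$.

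The gadget is a ``two-layer'' distribution on an odd-dimensional cube: for odd $e$ and $\alpha\in[0,1]$ let $\nu_\alpha^{(e)}$ put weight $\alpha$ on $\0$ and spread the remaining weight $1-\alpha$ uniformly over the layer $\tfrac{e+1}{2}$ of $Q_e$. Counting the fraction of that layer on which a given coordinate is $0$ gives $w_i^0(\nu_\alpha^{(e)})=1-(1-\alpha)\tfrac{e+1}{2e}$ for all $i$, so $w_i^0=c$ is solved by $1-\alpha=\tfrac{2e(1-c)}{e+1}$, and this value lies in $(\tfrac12,1)$ exactly when $\tfrac{e-1}{2e}<c<\tfrac34-\tfrac1{4e}$ --- the left inequality being automatic from $c>\tfrac12$. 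Since $e$ is odd, every point of layer $\tfrac{e+1}{2}$ is strictly closer to $\1$ than to $\0$ and no point is equidistant, so $P_1(\1,\0;\nu_\alpha^{(e)})=1-\alpha$. For odd $d$ we are then done: take $\mu=\nu_\alpha^{(d)}$ with $\alpha$ tuned so all $w_i^0=c$; then $c<\tfrac34-\tfrac1{4d}$ forces $1-\alpha>\tfrac12$, so $P_1(\1,\0)>\tfrac12$ and $(\0,\0)$ is not an equilibrium.

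For even $d$ a permutation-symmetric distribution does not suffice (one checks such distributions cannot beat the weaker threshold $\tfrac34-\tfrac1{2d}$, which corresponds to the $k=d$ rather than the $k=d-1$ bound appearing in the proof of Theorem~\ref{hypercube-mvt1}), so we break symmetry in one coordinate. Let $\mu$ run $\nu_\alpha^{(d-1)}$ on the coordinates $\{1,\dots,d-1\}$ and, independently, set coordinate $d$ to $0$ with probability $c$; then $w_i^0(\mu)=c$ for every $i$, for $i<d$ because coordinate $d$ is independent of the others and for $i=d$ by construction. Now take $X=[d-1]$. Writing $V'=(v_1,\dots,v_{d-1})$, one checks $d(V,X)<d(V,\0)$ if and only if $|V'|>\tfrac{d-1}{2}$, i.e.\ $|V'|\ge\tfrac d2$ --- a condition not involving $v_d$ --- and $T([d-1],\0)=\varnothing$; hence $P_1([d-1],\0;\mu)$ equals the $\nu_\alpha^{(d-1)}$-weight of $\{|V'|\ge\tfrac d2\}$, which is precisely the weight it puts on layer $\tfrac d2=\tfrac{(d-1)+1}{2}$, namely $1-\alpha$. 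Tuning $\alpha$ so that all $w_i^0=c$, the hypothesis $c<\tfrac34-\tfrac1{4(d-1)}$ gives $1-\alpha>\tfrac12$, so $(\0,\0)$ is again not an equilibrium; together with Proposition~\ref{weighted-equilibrium} this shows $\mu$ has no equilibrium. The main obstacle I anticipate is exactly this even case: recognising that one must give up full symmetry while still forcing all $w_i^0$ equal, and then verifying the identity $P_1([d-1],\0;\mu)=1-\alpha$, which works only because the Voronoi cell $V([d-1],\0)$ depends on the first $d-1$ coordinates alone, so the extra independent coordinate contributes nothing.
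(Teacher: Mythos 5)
Your construction is essentially the paper's: weight $\alpha$ on $\0$ plus the rest spread uniformly over layer $\tfrac{e+1}{2}$ of an odd-dimensional cube, with the even case handled by working in a $(d-1)$-dimensional subcube, and the conclusion drawn via Proposition~\ref{weighted-equilibrium}. The one place you genuinely differ is the treatment of the $d$th coordinate when $d$ is even: the paper simply fixes $x_d=0$ (which gives $w_d^0=1$ rather than $c$, so it does not literally meet the statement's requirement that $w_i^0=c$ for \emph{all} $i$), whereas your independent coin with bias $c$ on that coordinate does, and your verification that this extra coordinate leaves $P_1([d-1],\0)$ unchanged is correct. So your version is a slight tightening of the paper's argument rather than a different route.
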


\begin{proof}
Suppose that $d$ is odd. We construct such a distribution by choosing $\epsilon>0$ and setting
\[
\mu(X)=\begin{cases}
\frac{1}{2}-\epsilon & \text{ if } X=\0\\
(\frac{1}{2}+\epsilon)\binom{d}{\frac{d+1}{2}}^{-1} & \text{ if } |X|=\frac{d+1}{2}\\
0 & \text{ otherwise}
\end{cases}
\]
We have that $P_1(\1,\0)=\frac{1}{2}+\epsilon>\frac{1}{2}$ and so $(\0,\0)$ is not an equilibrium. So by Theorem \ref{weighted-equilibrium} this distribution has no equilibrium.

Since the distribution is symmetric in the coordinates, $w_i^0$ does not depend on $i$ and we have
\begin{align*}
w_i^0&=\left(\frac{1}{2}-\epsilon\right)+\binom{d-1}{\frac{d+1}{2}}\binom{d}{\frac{d+1}{2}}^{-1}\left(\frac{1}{2}+\epsilon\right)\\
&=\left(\frac{1}{2}-\epsilon\right)+\frac{d-1}{2d}\left(\frac{1}{2}+\epsilon\right)\\
&=\frac{3}{4}-\frac{1}{4d}-\epsilon\left(\frac{1}{2}+\frac{1}{2d}\right)  
\end{align*}
which can be made arbitrarily close to $\frac{3}{4}-\frac{1}{4d}$ as required. 

If $d$ is even we use the same construction in the $(d-1)$-dimensional subcube obtained by setting the final coordinate to be $0$. More precisely define:
\[
\mu(X)=\begin{cases}
\frac{1}{2}-\epsilon & \text{ if } X=\0\\
(\frac{1}{2}+\epsilon)\binom{d-1}{\frac{d}{2}}^{-1} & \text{ if $X_d=0$,} |X|=\frac{d}{2}\\
0 & \text{ otherwise}
\end{cases}
\]

In this distribution we have $P_1([d-1],\0)>\frac{1}{2}$ and so again $(\0,\0)$ is not an equilibrium.
\end{proof}

A similar construction demonstrates that the bound of Theorem \ref{hypercube-mvt2} is also optimal; we omit the details of the calculation.

\section{The Balanced $w_i^0=\frac{1}{2}$ case}

In this section we briefly discuss the consequences of our assumption that $w_i^0\not=\frac{1}{2}$ for all $i$. Suppose that this fails and (without loss of generality) we have $w_i^0=\frac{1}{2}$ for $1\leqslant i\leqslant m$ and $w_i^0>\frac{1}{2}$ for $m<i\leqslant d$. Rather than a majority point, we have a \emph{majority subcube} defined by $\{X\in Q_d : X_{m+1}=\ldots=X_d=0\}$.

The argument of Proposition \ref{weighted-equilibrium} implies that if $(A,B)$ is an equilibrium then both $A$ and $B$ must lie in this majority subcube. 

We construct examples which illustrate several possible behaviours. For simplicity, these all have $w_i^0=\frac{1}{2}$ for all $i\in[d]$ (the majority subcube is the whole hypercube). Together these examples show that in this balanced case it is possible to have all pairs, some pairs, or no pairs as equilibria

\begin{example}
If $\mu$ is the uniform measure (that is $\mu(X)=2^{-d}$ for all $X\in Q_d$) then any pair of points $(A,B)$ is an equilibrium.
\end{example}

\begin{example}
Let $d$ be odd and define $\mu$ according to the parity of $|X|$ by
\[
\mu(X)=\begin{cases}
2^{-d-1}& |X| \text{ is even}\\
0& |X| \text{ is odd}
\end{cases}
\]
It is easy to check that if $A,B\in Q_d$ do not form an antipodal pair then $P_1(A,B)=P_2(A,B)=\frac{1}{2}$. If $|A|$ is even (so $|A^c|$ is odd) then $P_1(A^c,A)>\frac{1}{2}$. It follows that the pair $(A,B)$ is an equilibrium if and only both $A$ and $B$ are odd parity vertices.
\end{example}

\begin{example}
Let $d=5$ and for $X\in Q_5$ define $\mu(X)=w(|X|)$ where
\[
w(k)=
\begin{cases}
\frac{1}{16}(1-\epsilon)+\frac{3}{8}\epsilon & k=0\\
\frac{1}{16}(1-\epsilon) & k=2\\
\frac{1}{16}(1-\epsilon)+\frac{1}{8}\epsilon & k=4\\
0 & \text{ otherwise}
\end{cases}
\]
where $\epsilon>0$ will be chosen later.

We will show that here is no equilibrium in this example. We need to check that for each $X\in Q_5$ there is a response $Y$ with $P_1(X,Y)<\frac{1}{2}$. Considering the possible cases for $X$:
\begin{itemize}
\item If $|X|$ is odd, we have that the payoff to $X$ against an opponent playing $X^c$ is the weight of the vertices adjacent to $X$. This is at most $\frac{5}{16}+c\epsilon$ for some constant $c$ and so if $\epsilon$ is small enough $P_1(X,X^c)<\frac{1}{2}$.

\item If $|X|=0$ then $P_1(X,11110)=w(0)+4w(2)+\frac{1}{2}6w(2)=\frac{1}{2}-\frac{1}{8}\epsilon<\frac{1}{2}$.

\item If $|X|=2$ then $P_1(X,11111)=w(0)+6w(2)=\frac{7}{16}-\frac{1}{16}\epsilon<\frac{1}{2}$.
 
\item If $X=01111$ then $P_1(X,11100)=3w(4)+5w(2)=\frac{1}{2}-\frac{1}{8}<\frac{1}{2}$ (the remaining cases with $|X|=2$ follow by symmetry).
\end{itemize}

Checking the cases of $|X|$ odd more carefully, we see that any $0<\epsilon<\frac{3}{5}$ will provide a specific example of a balanced probability measure with no equilibria.
\end{example}

\section{Mix of Product Measures}

It is perhaps too much to hope for strong results for such a general concept as an arbitrary weight functions. We turn now to considering what can be said about a particular class of distribution. 

Note first that if $\mu$ is monotone in the sense that $\mu(X)\geqslant \mu(Y)$ for all $X\subseteq Y$ then $(\0,\0)$ is always an equilibrium. Indeed, if $B\in Q_d$ is any point other than $\0$ we can match up all points of $Q_d\setminus T(\0,B)$ into pairs of the form $X,Y$ with $X\subseteq Y$, $X_i=Y_i$ for all $i$ for which $B_i=0$, and $d(\0,X)=d(B,Y)$ (effectively this involves finding matchings between layers $t$ and $k-t$ of a $k$-dimensional cube preserving inclusion.) Now, monotonicity shows that $P(\0,B)\geqslant P(B,\0)$ as required. Similarly, if $\mu(X)\leqslant \mu(Y)$ for all $X\subseteq Y$ then $(\1,\1)$ is always an equilibrium. This is effectively equivalent to the observation on single-peaked distributions in \cite{MMW}. 

In particular, the  product distribution $\mu(X)=p^{|X|}(1-p)^{d-|X|}$ (with $p\not=\frac{1}{2}$) has an equilibrium. In fact, this holds even for the more general product weight
\[
\mu(X)=\prod_{i : x_i=1} p_i \prod_{i : x_i=1} (1-p_i)
\]
(with $p_i\not=\frac{1}{2}$ for all $i$).

Perhaps the next natural case to consider is a mixture of two product distributions. Fix $0<\alpha<1$, $0<p_1<\frac{1}{2}<p_2<1$. We define
\[
\mu(X;d,\alpha,p_1,p_2)=(1-\alpha)p_1^{|X|}(1-p_1)^{d-|X|}+ \alpha p_2^{|X|}(1-p_2)^{d-|X|}.
\]
This distribution has a nice social interpretation; it models a society in which proportion $1-\alpha$ of voters are $0$-leaning and choose option $1$ on each issue with probability $p_1<\frac{1}{2}$, while the remaining proportion $\alpha$ of voters are $1$-leaning and choose option $1$ on each issue with probability $p_2>\frac{1}{2}$. Note that if we had $0<p_1,p_2<\frac{1}{2}$ or $\frac{1}{2}<p_1,p_2<1$ then the resulting distribution is monotone and so we must have an equilibrium by the argument above.

We can have non-equilibrium behaviour here, similar to the construction in Proposition \ref{non-equilib}. For example, if $\alpha=\frac{2}{3}$, $p_1=\frac{1}{5}$, $p_2=\frac{3}{5}$ then (thinking of $w_i^0$ as the probability that a randomly chosen voter prefers $0$ on issue $i$) we see that:
\[
w_i^0=(1-\alpha)(1-p_1)+\alpha(1-p_2)=\frac{8}{15}>\frac{1}{2}
\]
So $M=\0$ but almost all of the total $\frac{2}{3}$ weight coming from the $1$-leaning voters is concentrated close to layer $p_2d$ and so for large $d$ this weight makes $P(\1,\0)>\frac{1}{2}$. The next result shows that when $d$ is large, the existence of an equilibrium for this class of distributions depends on $\alpha$ rather than $p_1$ and $p_2$.

\begin{theorem}\label{mix-prod}
If $0<\alpha<1$, $0<p_1<\frac{1}{2}<p_2<1$ and $\mu(X; \alpha,p_1,p_2)$ is as above and $w_i^0>\frac{1}{2}$ (so that $M=\0$) then one of the following holds:
\begin{itemize}
\item $\0$ is the best response to $\0$ and so $(\0,\0)$ is an equilibrium.
\item $\1$ is the best response to $\0$.
\end{itemize}
Moreover, the second possibility occurs for all large $d$ if and only if $\alpha>\frac{1}{2}$. 
\end{theorem}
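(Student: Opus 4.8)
The plan is to exploit the structure of the mixture by conditioning on which of the two product components a voter comes from. Write $\mu = (1-\alpha)\nu_1 + \alpha\nu_2$ where $\nu_j(X) = p_j^{|X|}(1-p_j)^{d-|X|}$ is the $j$th product measure. For any point $X$, the payoff $P_1(X,\0)$ splits linearly: $P_1(X,\0) = (1-\alpha)P_1^{(1)}(X,\0) + \alpha P_1^{(2)}(X,\0)$, where $P_1^{(j)}$ is the payoff computed with $\nu_j$ in place of $\mu$. The first step is to understand, for each fixed product measure $\nu_j$, what the best response to $\0$ looks like. Since $\nu_1$ is monotone decreasing in $|X|$ (as $p_1 < \tfrac12$), the argument given just before the theorem (matching layers $t$ and $k-t$ of the relevant subcube, using monotonicity) shows $P_1^{(1)}(X,\0) \leqslant \tfrac12$ for all $X$, with the inequality tending to equality as we look at points "spread out" appropriately; in particular $\0$ is an optimal response against $\nu_1$. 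Symmetrically, $\nu_2$ is monotone increasing, so $\1$ is the best response to $\0$ against $\nu_2$, i.e. $P_1^{(2)}(X,\0) \leqslant P_1^{(2)}(\1,\0)$ for all $X$.

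With these two facts in hand, the dichotomy follows quickly. For any $X$,
\[
P_1(X,\0) = (1-\alpha)P_1^{(1)}(X,\0) + \alpha P_1^{(2)}(X,\0) \leqslant (1-\alpha)\cdot\tfrac12 + \alpha P_1^{(2)}(\1,\0).
\]
Now evaluate the right-hand side at $X = \1$: since $P_1^{(1)}(\1,\0) = \nu_1(V(\1,\0)) + \tfrac12\nu_1(T(\1,\0))$ and by the symmetry $\nu_1(V(\1,\0)) = 1 - \nu_1(V(\0,\1)) - \nu_1(T(\1,\0))$, one checks that $P_1^{(1)}(\1,\0)$ equals precisely the value the monotone-matching bound attains with equality, namely $\tfrac12$ minus a nonnegative quantity; in fact for the totally symmetric measure $\nu_1$ on $Q_d$ we get $P_1^{(1)}(\1,\0) = \tfrac12 - \tfrac12\nu_1(V(\0,\1)) + \tfrac12\nu_1(V(\1,\0))$, wait — more cleanly, by the automorphism $X \mapsto X^c$ we have $\nu_1(V(\1,\0))$ paired against $\nu_2$-type weight; the point is simply that $P_1^{(1)}(\1,\0) \leqslant \tfrac12$, so the displayed upper bound for a general $X$ is at least $P_1(\1,\0)$ only up to the slack coming from the $\nu_1$ term. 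The correct conclusion to extract: among all $X \neq \0$, the one maximizing $P_1(X,\0)$ is $\1$, because increasing $|X|$ (in the coordinatewise-monotone sense, moving from $\0$ towards $\1$) weakly helps the $\alpha\nu_2$ contribution all the way up to $\1$, while the $(1-\alpha)\nu_1$ contribution is bounded by $\tfrac12$ regardless. Hence either $P_1(\1,\0) \leqslant \tfrac12$, in which case $P_1(X,\0) \leqslant \tfrac12$ for every $X$ and $(\0,\0)$ is an equilibrium, or $P_1(\1,\0) > \tfrac12$, in which case $\1$ beats $\0$ and — being the maximizer — $\1$ is the best response to $\0$.

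For the "moreover" clause, the task is the asymptotics of $P_1(\1,\0)$ as $d \to \infty$. Under $\nu_2$ the weight concentrates on layer $\approx p_2 d > d/2$, so $\nu_2(V(\1,\0)) \to 1$; under $\nu_1$ the weight concentrates on layer $\approx p_1 d < d/2$, so $\nu_1(V(\1,\0)) \to 0$. The equidistant set $T(\1,\0)$ (layer $d/2$, when $d$ is even) has vanishing measure under both. Therefore $P_1(\1,\0) = (1-\alpha)\nu_1(V(\1,\0)) + \alpha\nu_2(V(\1,\0)) + \tfrac12(\cdots) \to \alpha$. Thus $P_1(\1,\0) > \tfrac12$ for all large $d$ precisely when $\alpha > \tfrac12$, and $P_1(\1,\0) < \tfrac12$ for all large $d$ when $\alpha < \tfrac12$ (with $\alpha = \tfrac12$ a borderline case where a finer concentration estimate, e.g. a Berry–Esseen bound on the binomial tails, settles which side $P_1(\1,\0)$ falls on, but this is not needed for the stated equivalence with strict inequality).

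The main obstacle I expect is making rigorous the claim that $\1$ is the \emph{global} maximizer of $P_1(\cdot,\0)$, not merely that it beats $\0$ — the monotone-matching argument as stated before the theorem compares $P_1(B,\0)$ with $P_1(\0,B)$ for a single $B$, and one must adapt it to compare $P_1(X,\0)$ with $P_1(\1,\0)$ across different $X$ of possibly different weight $|X|$. The clean way is: for fixed $X$ with support $J = \{i : x_i = 1\}$, restrict attention to the subcube on coordinates $J$; within it $P_1^{(2)}(X,\0)$ is computed by a monotone-increasing symmetric measure, so replacing $X$ by the all-ones vector \emph{of that subcube} only helps; then a separate monotonicity step in $d - |J|$ shows extending the support further (towards the genuine $\1$) also does not decrease the $\nu_2$-payoff, while the $\nu_1$-payoff stays $\leqslant \tfrac12$ throughout. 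Assembling these two monotonicity steps carefully is the crux; the concentration asymptotics in the last paragraph are routine by Chernoff/CLT.
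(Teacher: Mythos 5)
Your asymptotic analysis of $P_1(\1,\0)\to\alpha$ is correct and matches the paper, and your linear decomposition $P_1(X,\0)=(1-\alpha)P_1^{(1)}(X,\0)+\alpha P_1^{(2)}(X,\0)$ is the right starting point. But the central step --- that the maximum of $P_1(\cdot,\0)$ over $X\neq\0$ is attained at $\1$ --- is not established, and as stated your argument for it is invalid. Your chain of inequalities gives $P_1(X,\0)\leqslant(1-\alpha)\tfrac12+\alpha P_1^{(2)}(\1,\0)$, whereas the quantity you need to dominate it by is $P_1(\1,\0)=(1-\alpha)P_1^{(1)}(\1,\0)+\alpha P_1^{(2)}(\1,\0)$. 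Since $p_1<\tfrac12$ forces $P_1^{(1)}(\1,\0)<\tfrac12$ (strictly, and in fact it tends to $0$), your upper bound strictly exceeds $P_1(\1,\0)$, so it cannot show $P_1(X,\0)\leqslant P_1(\1,\0)$. The repair you sketch at the end has the same defect: the two monotonicity steps only show that enlarging the support of $X$ weakly increases the $\nu_2$-payoff, while the $\nu_1$-payoff strictly \emph{decreases} as $|X|$ grows; a sum of a decreasing and an increasing term can in principle peak at an intermediate value of $|X|$, which is exactly the case the theorem must exclude. (Note also that $\1$ is genuinely \emph{not} the maximizer over $X\neq\0$ when $\alpha$ is small --- then $P_1(X,\0)$ is decreasing in $|X|$ and the max over nonzero $X$ sits at $|X|=1$; the dichotomy survives only because everything is then below $\tfrac12$.)

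What is actually needed, and what the paper proves, is a quantitative statement about the sequence $a_k=P_1(X,\0)$ for $|X|=k$: the increments satisfy $a_{k+1}-a_k=0$ for odd $k$ and
\[
a_{k+1}-a_k=\tfrac12\binom{k}{k/2}\Bigl((1-\alpha)(p_1q_1)^{k/2}(p_1-q_1)+\alpha(p_2q_2)^{k/2}(p_2-q_2)\Bigr)
\]
for even $k$. The sign of this expression is governed by $\frac{\alpha}{1-\alpha}\bigl(\frac{p_2q_2}{p_1q_1}\bigr)^{k/2}\frac{p_2-q_2}{q_1-p_1}$, which is monotone in $k$, so the increments change sign at most once and only from negative to positive; hence $(a_k)$ decreases to a minimum and then (possibly) increases, and its maximum is attained at $k=0$ or $k=d$. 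This exact computation, which quantitatively balances the loss from the $\nu_1$-component against the gain from the $\nu_2$-component, is the missing ingredient; the qualitative monotonicity facts you invoke are not sufficient to rule out an interior maximum.
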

Of course, if $w_i^0<\frac{1}{2}$ then a similar statement applies with $M=\1$.

Notice that by choosing $\alpha=\frac{1}{2}+\epsilon$, $p_1=\frac{1}{2}-\epsilon$, $p_2=\frac{1}{2}+\epsilon$ and $d$ large we get an example with no equilibrium in which $w_i^0=(1-\alpha)(1-p_1)+\alpha(1-p_2)$ can be made arbitrarily close to $\frac{3}{4}$, asymptotically matching the bound in Theorem \ref{hypercube-mvt1}.

Before proving this, we make an observation which will be useful when calculating payoffs in this mixed product distribution.

If $I\subseteq[d]$, we write $Q_I$ for the set of all $d$-tuples of $0$s, $1$s and $*$s which take value $*$ in position $i$ if and only if $i\not\in I$. 
\[
Q_I=\{X\in\{0,1,*\}^d : x_i=* \text{ if and only if } i\not\in I\}
\]
As before, for $X\in Q_I$ we write $|X|$ for the number of $1$s in $X$.

If $X,Y\in Q_d$, then to decide which of $V(X,Y), V(Y,X), T(X,Y)$ a point $V$ lies in, we only need to look at the coordinates $i$ for which $x_i\not= y_i$. We denote the set of these coordinates by $X\triangle Y$. Now, given a distribution $\mu$ on $Q_d$ we define a new distribution $\mu_{X,Y}:Q_{X\triangle Y} \rightarrow \mathbb{R}_{\geqslant 0}$ on the subcube generated by $X$ and $Y$. This is defined by
where
\[
\mu_{X,Y}(U)=\mu(\{V\in Q_d : v_i=u_i \text{ for all } i\in X\triangle Y\}.
\]
The key point is that to determine $P(X,Y)$ we only need to look at $\mu_{X,Y}$. The cleanest to state and most useful application of this is the case $X=\0$.
\begin{prop}\label{subcube-prop}
For any distribution $\mu$ and any $Y\in Q_d$, if we define a new distribution $\mu_{\0,Y}$ as above then: 
\[
P(\0,Y)=\mu_{\0,Y}(\{V\in Q_Y : |Y|<|Y|/2\})+\frac{1}{2}\mu_{\0,Y}(\{V\in Q_Y : |Y|=|Y|/2\})
\]
\end{prop}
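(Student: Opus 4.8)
The plan is to unwind the definitions and observe that only the coordinates in $\0\triangle Y$ matter. Recall that $P(\0,Y)$ (that is, $P_1(\0,Y)$) equals $\mu(V(\0,Y))+\tfrac12\mu(T(\0,Y))$, so it is enough to establish the two identities
\[
\mu(V(\0,Y))=\mu_{\0,Y}(\{V\in Q_Y : |V|<|Y|/2\}),\qquad \mu(T(\0,Y))=\mu_{\0,Y}(\{V\in Q_Y : |V|=|Y|/2\}).
\]

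First I would note that $\0\triangle Y$ is exactly the support of $Y$, a set of size $|Y|$, so under our identification of elements of $Q_d$ with subsets of $[d]$ the symbol $Q_Y$ really means $Q_{\0\triangle Y}$. For any $X\in Q_d$, split the coordinates into those in $\0\triangle Y$ and those not. On a coordinate $i\notin\0\triangle Y$ we have $y_i=0$, so it contributes $x_i$ to both $d(X,\0)$ and $d(X,Y)$; on a coordinate $i\in\0\triangle Y$ we have $y_i=1$, so it contributes $x_i$ to $d(X,\0)$ but $1-x_i$ to $d(X,Y)$. Writing $r(X)=|\{i\in\0\triangle Y : x_i=1\}|$ for the number of $1$s of $X$ among the coordinates of $\0\triangle Y$, this gives
\[
d(X,Y)-d(X,\0)=\sum_{i\in\0\triangle Y}(1-2x_i)=|Y|-2r(X).
\]
Hence $X\in V(\0,Y)\iff r(X)<|Y|/2$ and $X\in T(\0,Y)\iff r(X)=|Y|/2$, and in both cases membership depends on $X$ only through its restriction to $\0\triangle Y$.

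Finally I would pass to $\mu_{\0,Y}$: by definition $\mu_{\0,Y}(U)=\mu(\{V\in Q_d : v_i=u_i\text{ for all }i\in\0\triangle Y\})$, and as $U$ ranges over $Q_Y$ these blocks partition $Q_d$, with every $V$ in the block of $U$ satisfying $r(V)=|U|$. Summing over all $U\in Q_Y$ with $|U|<|Y|/2$ and using the previous paragraph gives $\mu(V(\0,Y))=\sum_U\mu_{\0,Y}(U)=\mu_{\0,Y}(\{V\in Q_Y : |V|<|Y|/2\})$, and the same argument with $|U|=|Y|/2$ handles $T(\0,Y)$; combining the two with weights $1$ and $\tfrac12$ gives the stated formula. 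There is no real obstacle here — the proof is pure bookkeeping — and the only point requiring a little care is keeping straight that ``relevant coordinate'' means $i\in\0\triangle Y$ (the support of $Y$), not $i$ in the support of $X$.
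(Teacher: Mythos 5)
Your proof is correct and is exactly the bookkeeping the paper leaves implicit (the paper states Proposition \ref{subcube-prop} as an immediate consequence of the observation that membership of $V$ in $V(\0,Y)$, $V(Y,\0)$ or $T(\0,Y)$ depends only on the coordinates in $\0\triangle Y$, without writing out a proof). You also correctly read the statement's typo, taking $|V|<|Y|/2$ rather than the printed $|Y|<|Y|/2$, and your computation $d(X,Y)-d(X,\0)=|Y|-2r(X)$ is the right way to make that observation precise.
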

In other words we can express the payoff to a player choosing $Y$ against $\0$ in our original weighting $\mu$ as the weight of the `top half of layers' of a re-weighted $|Y|$-dimensional subcube. 

\begin{proof}[Proof of Theorem \ref{mix-prod}]
Let $a_k$ be the payoff to the the $\1$ player in the state $(\1,\0)$ under the distribution $\mu(\cdot; \alpha,p_1,p_2)$ on $Q_k$. It is also the case that in any $d\geqslant k$ we have $P_1(X,\0)=a_k$ for any $X$ with $|X|=k$. This follows from Proposition \ref{subcube-prop} and the fact that for the product measure (and hence for any convex combination of product measures) the restriction $\mu_{\0,X}$ is itself a product measure on $Q_X$.   
 
It suffices then to analyse carefully how $a_d$ grows with $d$. Let $0<p<1$, $q=1-p$ and define
\[
x_d(p)=\sum_{i=\lceil\frac{d+1}{2}\rceil}^d \binom{d}{i}p^iq^{d-i}+\frac{1}{2}\binom{d}{\frac{d}{2}}p^{d/2}q^{d/2}
\]
(ie the payoff $P_1([d],\0)$ in $Q_d$ with the product measure).

If $d$ is odd then:
\[
x_{d+1}(p)-x_d(p)=\frac{1}{2}\binom{d}{\frac{d-1}{2}}p^{\frac{d-1}{2}}q^{\frac{d+1}{2}}p-\frac{1}{2}\binom{d}{\frac{d+1}{2}}p^{\frac{d+1}{2}}q^{\frac{d-1}{2}}q=0
\]
While if $d$ is even then
\[
x_{d+1}(p)-x_d(p)=\frac{1}{2}\binom{d}{\frac{d}{2}}p^{\frac{d}{2}}q^{\frac{d}{2}}p-\frac{1}{2}\binom{d}{\frac{d}{2}}p^{\frac{d}{2}}q^{\frac{d}{2}}q=\frac{1}{2}\binom{d}{\frac{d}{2}}(pq)^{\frac{d}{2}}(p-q)
\]
So, for $p>\frac{1}{2}$ we have that $x_{2i-1}=x_{2i}$ and otherwise $x_d$ is increasing. That is:
\[
\frac{1}{2}=x_0<x_1=x_2<x_3=x_4<\dots.
\]
While if $p<\frac{1}{2}$ we have:
\[
\frac{1}{2}=x_0>x_1=x_2>x_3=x_4>\dots.
\]
It is also easy to see that if $p>\frac{1}{2}$ then $x_d\to 1$ while if $p<\frac{1}{2}$ then $x_d\rightarrow 0$.

Considering how the mixed product weight is defined, we have
\[
a_d=(1-\alpha)x_d(p_1)+\alpha x_d(p_2).
\]
(the sum of an increasing term and a decreasing term). So
\begin{align*}
a_{d+1}-a_d&=(1-\alpha)\left(x_{d+1}(p_1)-x_d(p_1)\right)+\alpha\left(x_{d+1}(p_2)-x_d(p_2)\right)\\
&=\left\{\begin{array}{ll}
\frac{1}{2}\binom{t}{\frac{t}{2}}\left((1-\alpha)(p_1q_1)^{\frac{t}{2}}(p_1-q_1)+\alpha(p_2q_2)^{\frac{t}{2}}(p_2-q_2)\right) & \text{ if $d$ is even}\\
0 & \text{ if $d$ is odd}
\end{array}\right.
\end{align*} 

We deduce that $a_{d+1}>a_d$ for even $d$ if and only if:
\[
\frac{\alpha}{(1-\alpha)}\left(\frac{p_2q_2}{p_1q_1}\right)^{\frac{d}{2}}\frac{p_2-q_2}{q_1-p_1}>1.
\]
Suppose now that $w_i^0=(1-\alpha)q_1+\alpha q_2>\frac{1}{2}$. We have $a_0=\frac{1}{2}, a_1=1-w_i^0$ so the sequence $a_k$ is initially decreasing. If $\left(\frac{p_2q_2}{p_1q_1}\right)\leqslant 1$ then the sequence will continue to decrease. If $\left(\frac{p_2q_2}{p_1q_1}\right)>1$ then it will decrease to a minimum and then increase. In either case, the maximum value of $a_k$ is attained either at $k=0$ or $k=d$. In the former case we have that $\0$ is a best response to $\0$ and so $(\0,\0)$ is an equilbirium. In the later case, if additionally $a_d>a_0=\frac{1}{2}$, we have that $\1$ is a best response to $\0$.

Finally, notice that as $d\to\infty$, $a_d\to\alpha$. It follows that the second (non-equilibrium) case occurs for large $d$ if and only if $\alpha>\frac{1}{2}$.
\end{proof}

\section{Further Work}

There are many open questions about this model or its variants. We mention three possible directions for future work.

The first is to find conditions guaranteeing an equilibrium based on something other than the quantities $w_I^m$. This could involve results about general structural conditions such as in Theorems \ref{hypercube-mvt1} and \ref{hypercube-mvt2} or particular classes of distribution such as in Theorem \ref{mix-prod}.

If there is no equilibrium, then a dynamic process arises as players move in turn to improve their payoff. The details of this depend on the exact moving rule; two natural examples would be to move to the position which maximises payoff given the other players position, or to make the shortest distance move which increases your payoff. Under each of these, what kind of trajectories do the players follow? This questions could be asked of the mix of product measures considered in Section 5 or more generally.

Finally, Theorem \ref{mix-prod} shows that there is a natural class of distributions in which the best response to the majority point is its antipode. Could this point towards a strategic explanation of polarisation. For instance, are there general conditions on the distribution under which an antipodal pair of positions forms a $k$-local equilibrium for some moderately large $k$?


\begin{thebibliography}{1}
\bibitem{downs} A. Downs. An economic theory of democracy. Harper \& Row New York, 1957.
\bibitem{DT} C. D\"urr, N.K. Thang, Nash Equilibria in Voronoi Games on Graphs, In: {\em Proceedings of the 15th Annual European Symposium on Algorithms (ESA)}, pages 17--28, 2007.
\bibitem{FMM} R. Feldmann, M. Mavronicolas and B. Monien. Nash Equilibria for Voronoi Games on Transitive Graphs. In: {\em Proceedings of the 5th International Workshop on Internet and Network Economics (WINE)}, 280--291, 2009.
\bibitem{GMPPR} D. Gerbner, V. M\'esz\'aros, D. P\'alv\"olgyi, A. Pokrovskiy and G. Rote. Advantage in the discrete Voronoi game. {\em J. Graph Algorithms Appl.} \textbf{18}, 439--457, 2014.
\bibitem{hotelling} H. Hotelling. Stability in competition. {\em Economic Journal} \textbf{39}, 41--57, 1929.
\bibitem{MMW} J. Maass, V. Mousseau and A. Wilczynski. A Hotelling-Downs Game for Strategic Candidacy with Binary Issues. {\em Proc. 2023 International Conference on Autonomous Agents and Multiagent Systems}, 2076--2084, 2023.
\bibitem{MMPS} M. Mavronicolas, B. Monien, V.G. Papadopoulou and F. Schoppmann. Voronoi Games on Cycle Graphs. In: {\em Proceedings of the 33rd International Symposium on Mathematical Foundations of Computer Science (MFCS)}, 503--514, 2008.
\bibitem{TDU} S. Teramoto, E.D. Demaine and R. Uehara. The Voronoi game on graphs and its complexity. {\em J. Graph Algorithms Appl.}, \textbf{15}, 485--501, 2011.

\end{thebibliography}
\end{document}